\definecolor{intcolor}{HTML}{CA0020}
\definecolor{extcolor}{HTML}{0571B0}
\DeclareMathOperator{\Tr}{Tr}
\DeclareMathOperator{\E}{\mathbf{E}}
\DeclareMathOperator{\Prob}{\mathbf{P}}
\DeclareMathOperator{\Spec}{Spec}
\DeclareMathOperator{\erf}{erf}
\newcommand{\ov}{\overline}
\newcommand{\ii}{\mathrm{i}}
\renewcommand{\C}{\mathbf{C}}
\newcommand{\C}{\mathbf{C}}
\newcommand{\HC}{\mathbf{H}}
\newcommand{\vx}{\bm{x}}
\newcommand{\cI}{\mathcal{I}}
\newcommand{\vw}{\bm{w}}
\newcommand{\vz}{\bm{z}}
\newcommand{\vy}{\bm{y}}
\newcommand{\Gin}{\mathrm{Gin}}
\newcommand{\wt}{\widetilde}
\newcommand{\cE}{\mathcal{E}}
\newcommand{\R}{\mathbf{R}}
\newcommand{\F}{\mathbf{F}}
\newcommand{\1}{\bm{1}}
\newcommand{\N}{\mathbf{N}}
\newcommand{\DD}{\mathbf{D}}
\newcommand{\cO}{\mathcal{O}}
\newcommand{\co}{{\scriptstyle\mathcal{O}}}
\newcommand{\diff}{\operatorname{d}\!{}}
\DeclarePairedDelimiter{\braket}{\langle}{\rangle}%
\DeclarePairedDelimiter{\abs}{\lvert}{\rvert}%
\DeclarePairedDelimiter{\norm}{\lVert}{\rVert}%
\providecommand\given{}
\newcommand\SetSymbol[1][]{\nonscript\:#1\vert\allowbreak\nonscript\:\mathopen{}}
\DeclarePairedDelimiterX{\tuple}[1](){\renewcommand\given{\SetSymbol[\delimsize]}#1}
\DeclarePairedDelimiterX{\set}[1]{\{}{\}}{\renewcommand\given{\SetSymbol[\delimsize]}#1}
\DeclarePairedDelimiterX{\Set}[1]\{\}{\renewcommand\given{\SetSymbol[\delimsize]}#1}
\DeclarePairedDelimiterXPP{\landauO}[1]{\cO}(){}{#1}
\DeclarePairedDelimiterXPP{\landauo}[1]{\co}(){}{#1}
\DeclarePairedDelimiterXPP{\landauok}[1]{\co_k}(){}{#1}
\DeclarePairedDelimiterXPP{\landauOprec}[1]{\cO_\prec}(){}{#1}
\DeclarePairedDelimiterXPP{\Exp}[1]{\E}[]{}{\renewcommand\given{\SetSymbol[\delimsize]}#1}
\DeclareFontFamily{U}{mathx}{\hyphenchar\font45}
\DeclareFontShape{U}{mathx}{m}{n}{
      <5> <6> <7> <8> <9> <10>
      <10.95> <12> <14.4> <17.28> <20.74> <24.88>
      mathx10
      }{}
\DeclareSymbolFont{mathx}{U}{mathx}{m}{n}
\DeclareMathAccent{\widecheck}{0}{mathx}{"71}
\numberwithin{equation}{section} 
\newtheorem{theorem}{Theorem}[section]
\newtheorem{assumption}[theorem]{Assumption}
\newtheorem{lemma}[theorem]{Lemma}
\newtheorem{proposition}[theorem]{Proposition}
\newtheorem{remark}[theorem]{Remark}
\newtheorem{conjecture}[theorem]{Conjecture}
\date{\today}
\author{Giorgio Cipolloni\(^{\dagger\ddagger}\) \and L\'aszl\'o Erd\H{o}s\(^{\dagger}\)}
\address{IST Austria, Am Campus 1, 3400 Klosterneuburg, Austria}
\author{Dominik Schr\"oder\(^{\ast}\)}
\address{Institute for Theoretical Studies, ETH Zurich, Clausiusstr.\ 47, 8092 Zurich, Switzerland}
\email{giorgio.cipolloni@ist.ac.at} 
\email{lerdos@ist.ac.at}
\email{dschroeder@ethz.ch}
\thanks{\(^\dagger\)Partially supported by ERC Advanced Grant No.~338804}
\thanks{\(^\ddagger\)This project has received funding from the European Union's Horizon 2020 research and innovation programme under the Marie Sk\l odowska-Curie Grant Agreement No. 665385.}
\thanks{\(^\ast\)Supported by Dr.\ Max R\"ossler, the Walter Haefner Foundation and the ETH Z\"urich Foundation}
\subjclass[2010]{60B20, 15B52} 
\keywords{Ginibre ensemble, Girko's Formula, Bulk universality}
\title{Towards the bulk universality of non-Hermitian random matrices}
\date{\today}
\begin{document} 
\thispagestyle{empty}

\begin{abstract}
    We consider the non-Hermitian analogue of the celebrated Wigner-Dyson-Mehta
    bulk universality phenomenon, i.e.\ that in the bulk the local eigenvalue statistics  
    of a large random matrix with independent, identically distributed centred entries
    are universal, in particular they asymptotically coincide with those of the Ginibre ensemble
    in the corresponding symmetry class. 
    In this paper we reduce this problem to understanding a certain microscopic regime
    for the Hermitized resolvent in Girko's formula by showing that all other regimes are
    negligible.
\end{abstract}

\maketitle

\section{Introduction}

Consider a large \(n\times n\) matrix \(X\) with independent, identically distributed \emph{(i.i.d.)}  centred entries
with variance \(n^{-1}\). According to the \emph{circular law}~\cite{MR1428519, MR773436, MR2908617, taovu2012}, the spectrum of \(X\) converges to the unit disk in the complex plane with uniform spectral density. The typical distance between nearby eigenvalues is \(n^{-1/2}\). We consider
the eigenvalue point process after rescaling it by a factor
of \(n^{1/2}\) around a fixed point \(z_0\in \C\), \(\abs{z_0}<1\). In case of the \emph{Ginibre ensemble}, i.e.\ if the entries of \(X\) are
Gaussian, all correlation
functions of this rescaled point process  can be computed explicitly, for both the real and the complex case, in the \(n\to \infty\) limit,   
see Remark~\ref{rem:explicit}. Beyond the Gaussian case no explicit formulas 
are available, but the outstanding conjecture asserts that 
the local eigenvalue statistics are given by exactly the same  formulas for essentially any distribution of the matrix elements. 
In this paper we make a step towards proving this conjecture in the bulk regime.  
The analogous universality result at the edge of the spectrum, \(\abs{z_0}=1\),
has been fully proven recently in~\cite{1908.00969} relying on supersymmetric methods to obtain a
lower tail estimate for the lowest singular value of \(X\)~\cite{1908.01653}.
Prior to our works, these universality conjectures have only been proven
under the restriction that the first four moments of the common distribution of the  matrix elements (almost) match
the first four moments of the standard Gaussian~\cite{taovu2015}. Matching the second moment amounts to a simple rescaling, but
the requirement of
matching any higher moments was an artefact of the proof. 

Local spectral universality questions have been motivated by Eugene Wigner's pioneering idea to model 
spectral statistics of complex quantum systems by those of simple random matrix ensembles
that respect  the basic symmetries but otherwise may not resemble at all to the initial quantum Hamiltonian.  The original 
Wigner-Dyson-Mehta (WDM) conjecture~\cite{MR0220494} concerned Hermitian random matrix ensembles, most prominently the
Wigner ensemble that is characterized by i.i.d.\ entries (up to the Hermitian symmetry). Since the resolution
of the WDM conjecture about ten years ago via the \emph{three-step strategy}
(see~\cite{bulletin, MR3699468} for an overview of the major steps and references), in the recent years many local spectral universality results 
have been obtained for  random matrix ensembles of increasing generality. However, apart from~\cite{taovu2015} 
and~\cite{1908.00969} all universality results have been restricted to Hermitian ensembles. 

The main reason why the three-step strategy has not yet been extended beyond the Hermitian case is the 
lack of  a good  analogue of the celebrated \emph{Dyson Brownian Motion} (DBM), a system of stochastic differential
equations for the eigenvalues under a natural matrix flow.  The  DBM is the essential core of the
three-step strategy; its  fast convergence to local 
equilibrium is the ultimate  mechanism behind  universality. This dynamical approach is extremely robust since
it not only detects universality but also induces it. Unfortunately, the non-Hermitian analogue of the DBM~\cite[Appendix A]{1801.01219}
involves overlaps of eigenvectors as well, making the rigorous analysis extremely complicated and currently beyond reach.

In the current approach, similarly  to our edge universality proof~\cite{1908.00969}, 
we circumvent the non-Hermitian DBM.\@ As standard in non-Hermitian spectral analysis, we use Girko's formula~\cite{MR773436} in the form given in~\cite{taovu2015}
that expresses linear eigenvalue statistics of \(X\) in terms
of  resolvents of a family of \(2n\times 2n\) Hermitian matrices
\begin{equation}
    \label{eq:linz1}
    H^z:=\begin{pmatrix}
        0 & X-z \\
        X^*-\overline{z} & 0
    \end{pmatrix}
\end{equation}
parametrized by \(z\in \C\).  This formula asserts that
\begin{equation}
    \label{girko}
    \sum_{\sigma\in \Spec(X)} f(\sigma) =\frac{1}{4\pi }\int_\C \Delta f(z) \log \abs{\det H^z}\, \diff^2z = -\frac{1}{4\pi} \int_{\C} \Delta f(z)\int_0^\infty \Im \Tr  G^z(\ii \eta)\diff\eta \diff^2z
\end{equation}
for any smooth, compactly supported test function \(f\), where \(G^z(w):= (H^z-w)^{-1}\) is the resolvent of \(H^z\).
The key point is that we are back to the Hermitian world and all tools and results 
developed for Hermitian ensembles in the last years are available. 

Utilizing  Girko's formula requires a very good understanding of the  resolvent  
of \(H^z\) along the imaginary axis for all \(\eta>0\).
The standard  \emph{local law} gives a computable deterministic approximation to \(\Im \Tr  G^z\)
with an error term of order \(1/\eta\) that is too crude for~\eqref{girko}, so we need a 
more accurate analysis.
\emph{A priori} all \(\eta\) regimes in~\eqref{girko}
may substantially contribute. The main result of this paper is to show that 
only the regime \(\eta\sim n^{-1}\) is relevant for the
local eigenvalue statistics  of \(X\). While this is not unexpected, the proof is  non-trivial. 
On very small scales \(\eta\ll n^{-1}\), there are no eigenvalues, hence \(\Im \Tr  G^z\) is negligible.
Above this microscopic scale, i.e.\ for \(\eta\gg n^{-1}\), we show that the trace of the resolvent \(   G^z\)  varies slowly in  \(z\),
hence there is an additional cancellation in~\eqref{girko} when 
\(\Im \Tr  G^z\) is  integrated against \(\Delta f(z)\) that  has zero integral. We exploit this cancellation 
by a first order Taylor expansion of the function \(z\to \Im \Tr  G^z\) and an auxilliary bound from~\cite{NonhermitianGGG}. 

This leaves the scale \(\eta\sim n^{-1}\) unexplored which is equivalent to
understanding a few small singular values of \(X-z\). 
We note that for a \emph{single} \(z\), the universality of the few smallest singular values of \(X-z\) was proven in~\cite{MR3916329} in the complex case. However, owing to the \(z\)-integration in~\eqref{girko},
one also needs the universality of the joint distribution the smallest  singular values
of \(X-z_1, X-z_2, \ldots, X-z_k\) for \emph{any finite collection} of \(z_i\) at distance \(n^{-1/2}\) from each other.
While this more general form of universality of the singular values is certainly expected to hold,
the proof of~\cite{MR3916329} currently cannot cover this generalization.

We remark that ideas  based solely on local laws and Green function comparison arguments
were sufficient for the edge proof in~\cite{1908.00969},
no Hermitian universality result was needed. The bulk regime is different, the necessary information 
on \(\Im \Tr  G^z(i\eta)\) for \(\eta\sim n^{-1}\) is apparently not accessible solely by these  methods. 
The classical Wigner-Dyson-Mehta universality for general Wigner matrices features the same distinction; all existing proofs
of sine-kernel universality in the \emph{bulk} spectrum requires some version of Dyson Brownian motion, 
while the Tracy-Widom universality for extreme eigenvalues at the \emph{edges}  of the Wigner semicircle law
can be proven  by carefully analyzing the Green function~\cite{MR3405746} (in fact even moment
method suffice~\cite{MR1727234}).

\subsection*{Notations and conventions}

We introduce some notations we use throughout the paper. For any \(k\in\N\) we use the notation \([k]:=\set{1,\ldots,k}\). We write \(\DD\) for the unit disk, \(\HC \) for the upper half-plane \(\HC :=\set{z\in\C\given\Im z>0}\), and for any \(z\in\C\) we use the notation \(\diff^2 z:= \frac{1}{2} \ii (\diff z\wedge \diff \overline{z})\) 
for the two dimensional volume form on \(\C\). For any \(2n\times 2n\) matrix \(A\) we use the notation \(\braket{ A}:= (2n)^{-1}\Tr  A\) to denote the normalized trace of \(A\). For positive quantities \(f,g\) we write \(f\lesssim g\) and \(f\sim g\) if \(f \le C g\) or \(c g\le f\le Cg\), respectively, for some constants \(c,C>0\) which depends only on the constants appearing in~\eqref{eq:boundmom}. We denote vectors by bold-faced lower case Roman letters \(\vx , \vy \in\C^k\), for some \(k\in\N\). Vector and matrix norms, \(\norm{ \vx }\) and \(\norm{ A}\), indicate the usual Euclidean norm and the corresponding induced matrix norm. Moreover, for a vector \(\vx \in\C^k\), we use the notation \(\diff \vx := \diff x_1\cdots \diff x_k\).

We will use the concept of ``with very high probability'' meaning that for any fixed \(D>0\) the probability of the event is bigger than \(1-n^{-D}\) if \(n\ge n_0(D)\). Moreover, we use the convention that \(\xi>0\) denotes an arbitrary small constant.

We use the convention that quantities without tilde refer to a general matrix with i.i.d.\ entries, whilst any quantity with tilde refers to the Ginibre ensemble, e.g.\ we use \(X\), \(\{\sigma_i\}_{i=1}^n\) to denote a non-Hermitian matrix with i.i.d.\ entries and its eigenvalues, respectively, and \(\widetilde{X}\), \(\{\widetilde{\sigma}_i\}_{i=1}^n\) to denote their Ginibre counterparts.

\section{Bulk universality conjecture}

We consider real or complex i.i.d.\ matrices \(X\), i.e.\ matrices whose entries are independent and identically distributed as \(x_{ab} \stackrel{d}{=} n^{-1/2}\chi\) for a (real or complex) random variable \(\chi\). We formulate the following assumption for  \(\chi\): 
\begin{assumption}\label{ass:a}
    We assume that \(\E \chi=0\) and \(\E\abs{\chi}^2=1\). In the complex case we also 
    assume \(\E\chi^2=0\) (this holds, for example, if \(\Re \chi\) and \(\Im \chi\) are i.i.d.).
    In addition, we assume the existence of high moments, i.e.\ that there exist constants \(C_p>0\) for each \(p\in\N\), such that
    \begin{equation}
        \label{eq:boundmom}
        \E \abs{\chi}^p\le C_p.
    \end{equation}
\end{assumption}

We denote the eigenvalues of \(X\) by \(\sigma_1,\ldots,\sigma_n\in\C\), and define the \emph{\(k\)-point correlation function \(p_k^{(n)}\)} of \(X\) implicitly as
\begin{equation}
    \label{eq:kpointfunc}
    \int_{\C^k}F(z_1,\ldots, z_k) p_k^{(n)}(z_1,\ldots, z_k)\,\diff^2 z_1\cdots\diff^2 z_k=\binom{n}{k}^{-1}\E  \sum_{i_1,\ldots, i_k} F(\sigma_{i_1}, \ldots, \sigma_{i_k}),
\end{equation}
for any smooth compactly supported test function \(F\colon\C^k \to \C\), with \(i_j\in \{1,\ldots, n\}\) for \(j\in\{1,\ldots,k\}\) all distinct. For the important special case when \(\chi\) follows a standard real or complex Gaussian distribution, we denote the \(k\)-point function of the \emph{Ginibre matrix} \(X\) by \(p_k^{(n,\Gin (\F))}\) for \(\F=\R,\C\). The \emph{circular law} implies that the \(1\)-point function converges 
\[\lim_{n\to\infty }p_1^{(n)}(z) = \frac{1}{\pi} \bm1(z\in\DD) = \frac{1}{\pi}\bm1(\abs{z}\le 1)\]
to the uniform distribution on the unit disk. On the scale \(n^{-1/2}\) of individual eigenvalues the scaling limit of the \(k\)-point function has been explicitly computed in the case of complex and real Ginibre matrices, \(X\sim\Gin (\R),\Gin (\C)\), i.e.\ for any fixed \(z_1,\ldots,z_k, w_1,\ldots,w_k\in\C\) there exist scaling limits \(p_{z_1,\ldots,z_k}^{(\infty)}=p_{z_1,\ldots,z_k}^{(\infty,\Gin (\F))}\) for \(\F=\R,\C\) such that
\[ 
\lim_{n\to\infty} p_k^{(n,\Gin (\F))}\Bigl(z_1+\frac{w_1}{n^{1/2}},\ldots,z_k+\frac{w_k}{n^{1/2}}\Bigr) = p_{z_1,\ldots,z_k}^{(\infty,\Gin (\F))}(w_1,\ldots,w_k).
\]

\begin{remark}\label{rem:explicit}
    The \(k\)-point correlation function \(p_{z_1,\ldots, z_k}^{(\infty,\Gin (\F))}\) of the Ginibre ensemble in both the complex and real cases \(\F=\C,\R\) is explicitly known; see~\cite{MR173726} and~\cite{MR0220494} for the complex case, and~\cite{MR2530159,MR1437734,17930739} for the real case, where the appearance of \(\sim n^{1/2}\) real eigenvalues causes a singularity in the density. In the complex case \(p_{z_1,\ldots, z_k}^{(\infty,\Gin (\C))}\) is determinantal, i.e.\ for any \(w_1,\ldots, w_k\in\C\) it holds
    \[p_{z_1,\ldots, z_k}^{(\infty,\Gin (\C))}(w_1,\ldots,w_k)= \det\left(K_{z_i,z_j}^{(\infty,\Gin (\C))}(w_i,w_j)\right)_{1\le i,j\le k}\]
    where for any complex numbers \(z_1\), \(z_2\), \(w_1\), \(w_2\) the kernel \(K_{z_1,z_2}^{(\infty,\Gin (\C))}(w_1,w_2)\) is defined by
    \begin{enumerate}[label=(\roman*)]
        \item For \(z_1\ne z_2\), \(K_{z_1,z_2}^{(\infty,\Gin (\C))}(w_1,w_2)=0\).
        \item For \(z_1=z_2\) and \(\abs{z_1}>1\), \(K_{z_1,z_2}^{(\infty,\Gin (\C))}(w_1,w_2)=0\).
        \item For \(z_1=z_2\) and \(\abs{z_1}<1\),
        \[
        K_{z_1,z_2}^{(\infty,\Gin (\C))}(w_1,w_2)=\frac{1}{\pi}e^{-\frac{\abs{w_1}^2}{2}-\frac{\abs{w_2}^2}{2}+w_1\overline{w_2}}.
        \]
        \item For \(z_1=z_2\) and \(\abs{z_1}=1\),
        \[
        K_{z_1,z_2}^{(\infty,\Gin (\C))}(w_1,w_2)=\frac{1}{2\pi}\left[ 1+ \erf\left( -\sqrt{2}(z_1\overline{w_2}+w_1\overline{z_2})\right) \right] e^{-\frac{\abs{w_1}^2}{2}-\frac{\abs{w_2}^2}{2}+w_1\overline{w_2}},
        \]
        where
        \[
        \erf(z):= \frac{2}{\sqrt{\pi}}\int_{\gamma_z} e^{-t^2} \diff t,
        \]
        for any \(z\in\C  \), with \(\gamma_z\) any contour from \(0\) to \(z\).
    \end{enumerate}
    For the corresponding much more involved formulas for \(p_k^{(\infty,\Gin (\R))}\) we defer the reader to~\cite{MR2530159}. 
\end{remark}

It is conjectured that \(p_{z_1,\ldots,z_k}^{(\infty,\Gin (\R,\C))}\) is universal (we recently proved this conjecture at the edge of the spectrum of \(X\) when all \(\abs{z_i}=1\)~\cite{1908.00969}).

\begin{conjecture}[Bulk universality]\label{theo:bulkuniv}
    Let \(X\) be an i.i.d.\ \(n\times n\) matrix with real or complex entries that satisfy Assumption~\ref{ass:a}.
    Then, for any fixed integer \(k\ge 1\), for any \(\tau>0\), for any complex spectral parameters \(z_1, \ldots, z_k\) such that \(\abs{z_j}\le 1-\tau\), \(j=1,\ldots,k\), and for any compactly supported smooth function \(F\colon\C^k \to \C\), we have the bound
    \begin{equation} 
        \label{eq:univ}
        \lim_{n\to +\infty}\int_{\C^k} F(\vw )\left[  p_k^{(n)} \left(\vz +\frac{\vw }{\sqrt{n}}\right)-p_{\vz }^{(\infty,\Gin (\F))}(\vw ) \right]\diff\vw = 0.
    \end{equation}
\end{conjecture}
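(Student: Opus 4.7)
The plan is to apply Girko's formula~\eqref{girko} to the \(k\)-point correlation function~\eqref{eq:kpointfunc}. Testing~\eqref{eq:univ} against a product test function \(F(w_1,\ldots,w_k)=F_1(w_1)\cdots F_k(w_k)\), approximating the delta masses implicit in~\eqref{eq:kpointfunc} by smooth bumps localized on the microscopic scale around each \(z_j\), and applying~\eqref{girko} to each factor, the problem reduces to the asymptotic comparison of a \(k\)-fold averaged product of
\[\int_\C \Delta f_j(z)\int_0^\infty \Im\Tr G^z(\ii\eta)\,\diff\eta\,\diff^2 z,\qquad f_j(\sigma)=n\,\phi\bigl(\sqrt{n}(\sigma-z_j)\bigr),\]
between \(X\) and its Ginibre counterpart \(\wt X\). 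The task is then to show that only a small, universal window of \(\eta\) can contribute to this difference.

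I would split each inner \(\eta\)-integral at the microscopic threshold \(\eta_0:=n^{-1}\) into the regimes \(\eta\ll\eta_0\), \(\eta\sim\eta_0\) and \(\eta\gg\eta_0\). On \(0<\eta\ll\eta_0\) I would invoke a lower tail bound for the smallest singular value of \(X-z\), applied uniformly on a fine mesh of \(z\in\supp\Delta f_j\), to conclude that \(\Im\Tr G^z(\ii\eta)\) and hence this slice of the integral is negligible with very high probability, both for \(X\) and for \(\wt X\). On \(\eta\gg\eta_0\) I would exploit that the deterministic approximation to \(\braket{\Im G^z(\ii\eta)}\) from the local law varies in \(z\) only on macroscopic scales while \(\Delta f_j\) is supported on scale \(n^{-1/2}\) with zero \(z\)-integral; a first-order Taylor expansion of \(z\mapsto \Im\Tr G^z(\ii\eta)\) around \(z_j\), combined with the fluctuation bounds on the Hermitized resolvent of~\cite{NonhermitianGGG}, produces the cancellation needed to render this regime negligible. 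Since both reductions apply identically to \(X\) and \(\wt X\), their contributions drop out of the difference.

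The remaining and genuinely hard piece is the microscopic window \(\eta\sim\eta_0\), which is governed by the handful of smallest singular values of each \(X-z_j\). To close the comparison, one would need the \emph{joint} microscopic universality of these smallest singular values across the correlated family \(\{X-z_j\}_{j=1}^{k}\) for any finite collection of base points at mutual distance of order \(n^{-1/2}\), matching the corresponding joint law for Ginibre. For a single \(z\) in the complex case this is due to~\cite{MR3916329}, but the joint statement with correlated microscopic shifts is strictly stronger and, as emphasized in the introduction, lies beyond the reach both of~\cite{MR3916329} and of current Hermitian Dyson Brownian Motion techniques applied at a fixed \(z\), since a genuine non-Hermitian DBM capable of resolving correlations across different \(z\) is not available. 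Granted such a joint microscopic singular-value universality, the three-regime reduction above would yield~\eqref{eq:univ}; absent it, the natural contribution of the present paper is exactly to carry out this reduction and thereby pinpoint the correlated microscopic singular-value problem as the sole remaining obstruction.
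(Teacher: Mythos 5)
Your proposal correctly recognizes that this statement is an open conjecture which the paper does not prove, and that the actual content is a reduction; your three-regime split of the \(\eta\)-integral (lower tail bound on the smallest singular value of \(X-z\) for \(\eta\ll n^{-1}\), first-order Taylor expansion in \(z\) combined with \(\int_\C\Delta f=0\) and the derivative bound from~\cite{NonhermitianGGG} for \(\eta\gg n^{-1}\), and the remaining microscopic window \(\eta\sim n^{-1}\) requiring joint universality of the smallest singular values of \(\{X-z_j\}_j\)) is precisely the route of Theorem~\ref{pro:wcp} and Proposition~\ref{prop reduction}. The only cosmetic differences from the paper are that it uses the two thresholds \(\eta_0=n^{-1-\epsilon}\) and \(\eta_1=n^{-1+\epsilon}\) rather than a single cut at \(n^{-1}\), and an additional elementary truncation at \(T=n^{100}\) for the very large \(\eta\) tail.
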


Without loss of generality we may assume that the \(n\)-independent test function \(F\) is of the form
\[ F(w_1,\ldots,w_k)=f^{(1)}(w_1)\cdots f^{(k)}(w_k),\]
with \(f^{(1)},\ldots, f^{(k)}\) being smooth and compactly supported. Indeed, any smooth function \(F\) can be effectively approximated by its truncated Fourier series (multiplied by smooth cut-off function of product form); see also~\cite[Remark 3]{taovu2015}.
After a change of variables and using the inclusion-exclusion principle,~\eqref{eq:univ} amounts to proving that the eigenvalues \(\sigma_i\) of \(X\) and \(\wt \sigma_i\) of a comparison Ginibre ensemble \(\wt X\) satisfy
\begin{equation}\label{eq:impbound}
    \begin{split}
        &\E\prod_{j=1}^k\left(\frac{1}{n}\sum_{i=1}^n f^{(j)}_{z_j}(\sigma_i)- \frac{1}{\pi}\int_\DD  f^{(j)}_{z_j}(z)\diff^2 z\right) \\
        &\qquad = \E\prod_{j=1}^k \left(\frac{1}{n}\sum_{i=1}^n f^{(j)}_{z_j}(\widetilde{\sigma}_i)- \frac{1}{\pi}\int_\DD  f^{(j)}_{z_j}(z)\diff^2 z\right) + \landauO{ n^{-c(k)}},
    \end{split}
\end{equation}
where we introduced the rescaled test functions
\begin{equation}
    \label{eq:deftf}
    f^{(j)}_{z_j}(z):= n f^{(j)} (\sqrt{n}(z-z_j)), \quad z\in\C,
\end{equation}
and the implicit multiplicative constant in \(\landauO{\cdot}\) depends on the norms \(\norm{\Delta f^{(j)}}_1\), \(j=1,\ldots, k\). 

A possible approach to prove~\eqref{eq:impbound}
goes by analysing the Hermitization \(H^z\) of \(X-z\) defined 
in~\eqref{eq:linz1} since \(H^z\) and its resolvent \(G^z=G^z(\ii\eta)=(H^z-\ii\eta)^{-1}\) are related to the eigenvalues \(\sigma_i\) of \(X\) via Girko's Hermitization formula~\eqref{girko} 
and each factor in~\eqref{eq:impbound} can be written as
\begin{equation}
    \label{girko2}
    \frac{1}{n}\sum_{i=1}^n f^{(j)}_{z_j}(\sigma_i)- \frac{1}{\pi}\int_\DD  f^{(j)}_{z_j}(z)\diff^2 z
    = \frac{1}{2\pi}\int_\C   \Delta f^{(j)}_{z_j}(z)\int_0^\infty \braket{\Im G^z(\ii \eta)-\Im m^z(\ii \eta)} \diff \eta\diff^2 z,
\end{equation}
where \(m^z\) is the solution of the \emph{Dyson equation}~\eqref{eq:smde}, and we also used the identity  (cf.~\cite[Definition 2.3]{MR3770875})
\[\frac{1}{\pi}\int_\DD  f^{(j)}_{z_j}(z)\diff^2 z =  \frac{1}{2\pi}\int_\C   \Delta f^{(j)}_{z_j}(z)\int_0^\infty
\Im m^z(\ii \eta) \diff \eta\diff^2 z.\]
The contribution of the regime \(\eta\sim 1/n\) in~\eqref{girko2}
is given by \( \mathcal{I}_\epsilon(X,  f_{z_j}^{(j)}) \) where we define
\begin{equation}\label{eq:mainc}
    \mathcal{I}_\epsilon(X,  g_{z_0}) : = \frac{1}{2\pi}\int_\C   \Delta g_{z_0}(z)\int_{n^{-1-\epsilon}}^{n^{-1+\epsilon}}
    \braket{\Im G^z(\ii \eta)-\Im m^z(\ii \eta)} \diff \eta\diff^2 z
\end{equation}
and recall \(g_{z_0}(z):= n g (\sqrt{n}(z-z_0))\). Our main result is that for each factor in~\eqref{eq:impbound} the main contribution from Girko's Hermitization formula~\eqref{girko2} is given by \(\mathcal I_\epsilon\).
\begin{theorem}\label{pro:wcp}
    For fixed \(\tau>0\) and any sufficiently small \(\epsilon>0\) there exists a constant \(C=C_\tau>0\) such that for any compactly supported function \(g\) and any \(z_0\in \C\) with \(\abs{z_0}\le 1-\tau\) it holds that
    \begin{equation}\label{fI}
        \frac{1}{n}\sum_{i=1}^n g_{z_0}(\sigma_i)- \frac{1}{\pi}\int_\DD  g_{z_0}(z)\diff^2 z =  \mathcal{I}_\epsilon(X, g_{z_0}) + \mathcal{E}_\epsilon
    \end{equation}
    with an error \(\cE_\epsilon\) of size
    \begin{equation}\label{Eest}
     \E \abs{ \cE_\epsilon}\le C_\tau \norm{\Delta g}_1 n^{-\epsilon/4}.
    \end{equation}
\end{theorem}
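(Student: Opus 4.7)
The plan is to split the $\eta$-integral in Girko's formula~\eqref{girko2} into the three regimes
\[
\int_0^\infty \diff\eta = \int_0^{n^{-1-\epsilon}} + \int_{n^{-1-\epsilon}}^{n^{-1+\epsilon}} + \int_{n^{-1+\epsilon}}^\infty
\]
and to show that the first and third contributions to~\eqref{girko2} are small in expectation, the middle one being $\mathcal I_\epsilon$ by definition. The two outer regimes are handled by distinct mechanisms, so the body of the proof has the corresponding three-part structure.

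For the subcritical window $\eta\in(0, n^{-1-\epsilon})$ there are typically no eigenvalues of $H^z$, so $\braket{\Im G^z(\ii\eta)}$ is very small. I would invoke a lower-tail bound of the form $\Prob(\lambda_{\min}(H^z)\le n^{-1-\delta})\lesssim n^{-c\delta}$ uniformly for $\abs{z}\le 1-\tau$ (of the type that also underlies the edge proof~\cite{1908.00969} via~\cite{1908.01653}). On the good event $\{\lambda_{\min}(H^z)\ge n^{-1-\epsilon/2}\}$ one has $\braket{\Im G^z(\ii\eta)}\lesssim \eta\cdot n^{-1}\sum_i \lambda_i^{-2}\lesssim \eta/\lambda_{\min}$, which integrates on $(0,n^{-1-\epsilon})$ to $\lesssim n^{-c\epsilon}$; on the complementary event, of probability $\lesssim n^{-c\epsilon}$, the deterministic bound $\braket{\Im G^z(\ii\eta)}\le 1/\eta$ combined with a crude polynomial lower cutoff on the smallest singular value suffices. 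Integration against $\Delta g_{z_0}$, whose $L^1$-norm (after the change of variables $u=\sqrt n(z-z_0)$) scales as $n\norm{\Delta g}_1$, yields an expected contribution of size $\norm{\Delta g}_1 n^{-c\epsilon}$.

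For the regime $\eta\ge n^{-1+\epsilon}$ I would further split at some constant $C_0$. On the true tail $\eta>C_0$, the geometric series $G^z(\ii\eta)=-\sum_k (\ii\eta)^{-k-1}(-H^z)^k$ combined with $\braket{H^z}=0$ and $\norm{H^z}=O(1)$ with very high probability gives $\abs{\braket{\Im G^z(\ii\eta)-\Im m^z(\ii\eta)}}\lesssim \eta^{-3}$, whose integral is manifestly negligible. For the remaining mesoscopic range $\eta\in(n^{-1+\epsilon},C_0)$, the key cancellation is that the function $f_\eta(z):=\braket{\Im G^z(\ii\eta)-\Im m^z(\ii\eta)}$ varies slowly in $z$ while $\Delta g_{z_0}$ is localized at scale $n^{-1/2}$ around $z_0$. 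Performing a first-order Taylor expansion
\[f_\eta(z)=f_\eta(z_0)+(z-z_0)\partial_z f_\eta(z_0)+(\bar z-\bar z_0)\partial_{\bar z}f_\eta(z_0)+R_\eta(z),\]
and noting that the first three terms are harmonic in $(z,\bar z)$ and are therefore annihilated by $\Delta$ after integration by parts, leaves only the remainder. Since $\abs{z-z_0}\le n^{-1/2}$ on $\supp g_{z_0}$ one has $\abs{R_\eta(z)}\lesssim n^{-1}\norm{\partial^2 f_\eta}_\infty$, and combined with $\norm{\Delta g_{z_0}}_1\lesssim n\norm{\Delta g}_1$ this yields a per-$\eta$ error of size $\norm{\Delta g}_1\cdot\norm{\partial^2 f_\eta}_\infty$.

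The decisive step is then to bound $\norm{\partial^2 f_\eta}_\infty$ integrably in $\eta$: the trivial estimate $\norm{\partial^2 f_\eta}\lesssim \eta^{-3}$ coming from $\norm{G^z}\le 1/\eta$ is far too weak, producing a divergent integral on $(n^{-1+\epsilon},C_0)$. Here I would invoke the auxiliary averaged bound for products of three resolvents from~\cite{NonhermitianGGG}: using $\partial_z G^z=G^z E_+ G^z$ with $E_+$ the appropriate off-diagonal block (and similarly for $\partial_{\bar z}$), the quantity $\partial_z\partial_{\bar z}\braket{G^z}$ is an averaged trace of three $G^z$'s, for which~\cite{NonhermitianGGG} provides an estimate stronger than $\eta^{-3}$ by a power of $(n\eta)$ sufficient to give $\int_{n^{-1+\epsilon}}^{C_0}\norm{\partial^2 f_\eta}_\infty\diff\eta\lesssim n^{-c\epsilon}$. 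Summing the three regimes and choosing $\epsilon$ small delivers the stated rate $n^{-\epsilon/4}$ in expectation. The hardest part is this final mesoscopic second-derivative estimate, whose integrability in $\eta$ relies entirely on the refined multi-resolvent machinery of~\cite{NonhermitianGGG} and cannot be obtained from the single-resolvent local law alone.
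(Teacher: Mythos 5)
Your overall three-regime split of the $\eta$-integral matches the paper's decomposition exactly, and the subcritical and mesoscopic regimes are indeed the two non-trivial ones. However, both regimes contain issues worth flagging.

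\textbf{Subcritical regime $\eta<n^{-1-\epsilon}$.} The inequality chain $\braket{\Im G^z(\ii\eta)}\lesssim \eta\cdot n^{-1}\sum_i\lambda_i^{-2}\lesssim\eta/\lambda_{\min}$ breaks at the second step: $n^{-1}\sum_i\lambda_i^{-2}$ is \emph{not} bounded by $\lambda_{\min}^{-1}$ in general (the trivial bound is $\lambda_{\min}^{-2}$, off by a factor $1/(n\lambda_{\min})\sim n$). The quantity $n^{-1}\sum_i\lambda_i^{-2}$ is dominated by the handful of eigenvalues near $1/n$, and controlling it requires bounding the \emph{number} of small singular values, not just the smallest one. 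The paper does precisely this: it splits the spectral sum into $\abs{\lambda_i}<n^{-l}$ (handled by the tail bound), $n^{-l}\le\abs{\lambda_i}\le n^{\epsilon/2}\eta_0$ (handled by showing in expectation there are $\lesssim n^{-\epsilon/4}$ such eigenvalues), and $\abs{\lambda_i}\ge n^{\epsilon/2}\eta_0$ (handled by the averaged local law at spectral parameter $n^{\epsilon}\eta_0$). Your sketch skips this case analysis, which is the actual content of Lemma~\ref{lem:boundseg}.

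\textbf{Mesoscopic regime $\eta\in(n^{-1+\epsilon},T)$.} Here you take a genuinely different route, and it is worth comparing carefully. The paper estimates $\E\abs{I_{\eta_1}^T}^2$ rather than $\E\abs{I_{\eta_1}^T}$ and Taylor-expands $\braket{\Im G^{z'}-\Im m^{z'}}$ in $z'$ around $z$ (the other integration variable, not the fixed center $z_0$) to \emph{first} order. The zeroth-order term drops because $\int\Delta f=0$, and the remainder is bounded by the product of the averaged local law $\abs{\braket{\Im G^z-\Im m^z}}\lesssim n^\xi/(n\eta)$ on one factor and the first-derivative bound $\abs{\partial_z\braket{G^z-m^z}}\lesssim n^\xi/(n\eta^{3/2})$ on the other, multiplied by $\abs{z-z'}\lesssim n^{-1/2}$. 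The smallness of $\E\abs{I}^2$ then comes from the \emph{product} of two local-law gains. Your approach works with $\E\abs{I}$ directly, Taylor-expanding around $z_0$ to \emph{second} order so that the constant and linear (harmonic) terms are annihilated by $\Delta g_{z_0}$, leaving a remainder controlled by $\norm{\partial^2 f_\eta}_\infty\cdot n^{-1}$ — but here there is only one local-law factor available, so you would need a second-derivative bound that beats the trivial $\eta^{-3}$ by roughly a factor $n^{-1}\eta^{1/2}$. The first-derivative bound in~\cite{NonhermitianGGG} is $n^\xi/(n\eta^{3/2})$, an improvement of $(\eta^{1/2}/n)$ over the trivial $\eta^{-2}$; the natural analogue for the second derivative, $n^\xi/(n\eta^{5/2})$, integrates over $[\eta_1,C_0]$ to $n^{1/2-3\epsilon/2}$, which diverges, so your numerology does not close with the analogous bound. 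You would need an improvement by a full factor $n^{-3/2}$ over $\eta^{-3}$ at small $\eta$, i.e.\ a genuinely stronger three-resolvent estimate. You correctly identify this as the crux, but the availability of such a bound is not established, and the paper's second-moment trick neatly sidesteps the question by never requiring more than first derivatives.
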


As a consequence we can reduce Conjecture~\ref{theo:bulkuniv} to a conjecture about joint moments of \(\cI_\epsilon\) for arbitrarily small \(\epsilon>0\).
\begin{proposition}\label{prop reduction}
    Suppose there exists \(\epsilon>0\) such that for each \(k\in\N\) there exists \(c(k)>0\) such that 
    \begin{equation}
        \label{eq:c}
        \E \prod_{j=1}^k \mathcal{I}_\epsilon(X, f^{(j)}_{z_j}) =  \E \prod_{j=1}^k \mathcal{I}_\epsilon(\widetilde X, f^{(j)}_{z_j}) +\landauO*{n^{-c(k)}},
    \end{equation}
    for any collection of smooth compactly supported test functions \(f^{(j)}\), then Conjecture~\ref{theo:bulkuniv} holds true.
\end{proposition}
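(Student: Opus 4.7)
The plan is to start from the moment reformulation~\eqref{eq:impbound} of Conjecture~\ref{theo:bulkuniv} and use Theorem~\ref{pro:wcp} to trade each of the $k$ linear statistics
\[L_j(X):=\frac{1}{n}\sum_{i=1}^n f^{(j)}_{z_j}(\sigma_i)-\frac{1}{\pi}\int_\DD f^{(j)}_{z_j}(z)\,\diff^2 z\]
for its microscopic contribution $\mathcal{I}_{\epsilon,j}:=\mathcal{I}_\epsilon(X,f^{(j)}_{z_j})$, incurring an error $\mathcal{E}_{\epsilon,j}:=L_j-\mathcal{I}_{\epsilon,j}$ whose first absolute moment is bounded by $C_\tau\norm{\Delta f^{(j)}}_1 n^{-\epsilon/4}$ by~\eqref{Eest}. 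Once this replacement is done for both $X$ and its Ginibre counterpart $\widetilde X$, the hypothesis~\eqref{eq:c} directly matches the resulting leading terms up to $n^{-c(k)}$, so the entire task reduces to showing that the error introduced by the replacement is $\landauo{1}$.

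To control the replacement error I would use the telescopic identity
\[\prod_{j=1}^k L_j-\prod_{j=1}^k\mathcal{I}_{\epsilon,j}=\sum_{i=1}^k L_1\cdots L_{i-1}\cdot\mathcal{E}_{\epsilon,i}\cdot\mathcal{I}_{\epsilon,i+1}\cdots\mathcal{I}_{\epsilon,k},\]
so that each cross term contains exactly one factor $\mathcal{E}_{\epsilon,i}$. Taking expectation and combining~\eqref{Eest} with high-probability a priori bounds on the remaining factors controls each cross term. The bounds needed are $\abs{L_j}\prec 1$, which follows from the optimal local circular law in the bulk (the number of eigenvalues of $X$ in a microscopic $n^{-1/2}$-window about $z_j$ is stochastically bounded), and $\abs{\mathcal{I}_{\epsilon,j}}\prec 1$, which follows from the optimal local law for $H^z$, $\braket{\Im G^z(\ii\eta)-\Im m^z(\ii\eta)}\prec(n\eta)^{-1}$, upon integrating $\diff\eta$ over $[n^{-1-\epsilon},n^{-1+\epsilon}]$ and using $\int_\C\abs{\Delta f^{(j)}_{z_j}(z)}\,\diff^2 z=n\norm{\Delta f^{(j)}}_1$ to trade the microscopic singularity of the local-law error for a harmless $\log n$ factor.

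With these inputs, each cross term is bounded by $C n^{-\epsilon/4+k\xi}$ for arbitrarily small $\xi>0$, which is $\landauo{1}$ once $\xi$ is chosen smaller than $\epsilon/(4k)$. Writing the analogous identity for $\widetilde X$, subtracting, and invoking the hypothesis~\eqref{eq:c} then yields~\eqref{eq:impbound} and hence Conjecture~\ref{theo:bulkuniv}. I do not foresee a serious obstacle in this plan: the structure is a clean telescoping expansion, and the a priori bounds on $L_j$ and $\mathcal{I}_{\epsilon,j}$ are standard consequences of the bulk local laws. The mildly delicate point is that the bound on $\mathcal{I}_{\epsilon,j}$ must hold uniformly in $\eta$ down to $n^{-1-\epsilon}$, but the $(n\eta)^{-1}$ singularity of the local-law error is absorbed by the $\diff\eta$ integration, producing at worst a polylogarithmic cost that is easily dominated by $n^{-\epsilon/4}$.
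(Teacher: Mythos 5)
Your proposal is correct and takes essentially the same route as the paper's short proof: split each factor via Theorem~\ref{pro:wcp} into \(\mathcal{I}_\epsilon + \mathcal{E}_\epsilon\), use hypothesis~\eqref{eq:c} for the all-\(\mathcal{I}_\epsilon\) term, and bound the cross terms by combining the first-moment estimate~\eqref{Eest} with the high-probability a priori bounds of Lemma~\ref{lem:hpbound}. Your telescoping identity and appeal to the local circular law for the \(L_j\) factors are merely bookkeeping variants of the paper's direct expansion of \(\prod(\mathcal{I}_{\epsilon,j}+\mathcal{E}_{\epsilon,j})\), which already supplies the needed bound on each summand through Lemma~\ref{lem:hpbound}.
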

In a previous version of this paper we claimed a proof of~\eqref{eq:c} based upon the universality of singular values of \(X-z\) (see~\cite[Theorem 3.2]{MR3916329} in the complex case, and~\cite[Theorem 2.8]{2002.02438} in the real case). However, this result implies~\eqref{eq:c} only for \(k=1\); for general \(k\ge 2\) one would need a multi-\(z\) version of~\cite[Theorem 3.2]{MR3916329} and~\cite[Theorem 2.8]{2002.02438}, that is not yet available.

The main inputs for the proof of Theorem~\ref{pro:wcp} are the following two propositions. The first one is the optimal local law for \(G^z\) in Proposition~\ref{prop:locallaw}. It  asserts that
in the limit \(n\to +\infty\) the resolvent of \(H^z\) becomes deterministic and its limit can be found by solving the scalar equation
\begin{equation}
    \label{eq:smde}
    -\frac{1}{m^z}=w+m^z-\frac{\abs{z}^2}{w+m^z}, \quad m^z(w)\in\HC , \quad w\in \HC ,
\end{equation}
which is a special case of the \emph{matrix Dyson equation} (MDE), see e.g.~\cite{MR3916109}. 
On the imaginary axis \(m^z(\ii\eta)=\ii \Im m^z(\ii\eta)\). 
Then for \(\eta>0\) we define
\[
u=u^z(\ii\eta):=\frac{\Im m^z(\ii\eta)}{\eta+m^z(\ii\eta)}, \quad M=M^z(\ii\eta):=\left( \begin{matrix}
    m^z(\ii\eta) & -zu(\ii\eta) \\
    -\overline{z}u(\ii\eta) & m^z(\ii\eta) 
\end{matrix}\right).
\]
Moreover, 
\begin{equation}\label{Mbound}
    u^z(\ii\eta)\lesssim 1, \quad \norm{ M^z(\ii\eta)}\lesssim1,
\end{equation}
hold uniformly in \(z\) as long as \(\abs{z}\le 1-\tau\) for some fixed \(\tau>0\). 

\begin{proposition}[Local law for \(G^z\) on the imaginary axis~\cite{1907.13631}]\label{prop:locallaw}
    Let \(X\) be an i.i.d.\ \(n\times n\) matrix, whose entries satisfy Assumption~\ref{ass:a}, and let \(H^z\) as in~\eqref{eq:linz1}. Then for any deterministic vectors \(\vx , \vy \) and matrix \(R\), and any \(\xi>0\), \(\tau>0\) we have the bound
    \begin{equation}\label{entry}
        \abs*{ \braket{\vx, (G^z(\ii\eta)-M^z(\ii\eta))\vy}}  \le n^\xi \norm{\vx}\norm{\vy} \left( \frac{1}{\sqrt{n\eta}}+\frac{1}{n\eta}\right)
    \end{equation}
    \begin{equation}\label{ave}
        \abs*{\braket{ R(G^z(\ii\eta)-M^z(\ii\eta))}} \le \frac{n^\xi\norm{ R}}{n\eta}, 
    \end{equation}
    with very high probability, simultaneously in all \(\abs{z}\le1-\tau\) and all \(\eta>n^{-2}\),  as long as  \(n\) is sufficiently large, \(n\ge n_0\), where \(n_0\)
    is uniform in \(z\), it depends only on \(\tau,\xi \) and the control parameters in Assumption~\ref{ass:a}.
\end{proposition}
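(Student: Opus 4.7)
The plan is to follow the now-standard three-step strategy for local laws of structured Hermitian matrices: (i) analyse the deterministic MDE~\eqref{eq:smde} and its linear stability, (ii) derive an approximate self-consistent equation for $G^z-M^z$, and (iii) bootstrap the resulting control in $\eta$ from the macroscopic scale down to $\eta>n^{-2}$.

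First, I would study the MDE~\eqref{eq:smde} on the imaginary axis, solving for $m^z(\mathrm i\eta)$ and verifying the bounds~\eqref{Mbound}. The key deterministic input is the linearization $\cB$ of the MDE around $M^z$, viewed as a linear operator on $2\times 2$ matrices: in the bulk $\abs{z}\le 1-\tau$ it is invertible with $\norm{\cB^{-1}}\le C_\tau$ uniformly in $\eta>0$. This uniform stability is what eventually allows one to push the local law far below the natural fluctuation scale.

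Second, I would derive a perturbed matrix Dyson equation for $G^z$. Starting from the resolvent identity $(H^z-\mathrm i\eta)G^z=\1$ and applying a cumulant expansion in the i.i.d.\ entries of $X$ (justified by the moment assumption~\eqref{eq:boundmom}), one obtains, schematically,
\[ \cB(G^z - M^z) = \cE^z + (\text{higher order}), \]
where $\cE^z$ is an error matrix built from linear and quadratic fluctuations of the entries of $X$ against $G^z$. The stability from step one then converts isotropic and averaged bounds on $\cE^z$ into the corresponding bounds on $G^z-M^z$. Standard large deviation estimates for quadratic forms give~\eqref{entry}; the extra $1/\sqrt{n\eta}$ gain in~\eqref{ave} arises from the fluctuation-averaging mechanism for the trace against $R$, precisely as in the Wigner case. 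Uniformity in $\eta>n^{-2}$ and $\abs{z}\le 1-\tau$ is then obtained by a continuity/bootstrap argument on a polynomial-size dyadic grid of $(z,\eta)$, starting from the trivial bound at $\eta\sim 1$ and using the Lipschitz control $\norm{G^z(\mathrm i\eta)}\lesssim 1/\eta$ to interpolate between grid points.

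The main obstacle is the extremely small scale $\eta\gtrsim n^{-2}$, well below the typical eigenvalue spacing $n^{-1}$ of $H^z$, where $\norm{G^z}$ can be as large as $n^2$ and naive concentration bounds become useless. To overcome this one must exploit the chiral off-diagonal block structure of $H^z$: the absence of a diagonal in $H^z$ forces the diagonal blocks of $M^z$ to be purely imaginary and those of $G^z$ to be approximately so on the imaginary axis, producing cancellations in $\cE^z$ without which the fluctuation estimate would blow up already at $\eta\ll n^{-1}$. This chiral structure, together with the uniform stability of $\cB$, is what keeps the bounds $n^\xi/\sqrt{n\eta}+n^\xi/(n\eta)$ and $n^\xi\norm{R}/(n\eta)$ meaningful all the way down to $\eta=n^{-2}$.
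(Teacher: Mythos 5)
The paper does not actually prove Proposition~\ref{prop:locallaw}: it imports it wholesale. The proposition is titled ``Local law for \(G^z\) on the imaginary axis~\cite{1907.13631}'', and the text following it states explicitly that it ``was proved in~\cite[Theorem 5.2]{1907.13631}'' and that the extension to hold simultaneously in all \(\abs{z}\le 1-\tau\) and all \(\eta>n^{-2}\) is done in~\cite[Appendix A]{1908.00969}, with further pointers to~\cite{MR3770875,MR3230002,2002.02438} for earlier and related versions. So there is no internal proof in this paper against which to compare your sketch; you are reconstructing the argument of a cited theorem.

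With that caveat, your roadmap does broadly match the strategy of those references (MDE stability, cumulant expansion giving a self-consistent equation for \(G^z-M^z\), bootstrap on a polynomial grid in \((z,\eta)\)). However, one key step is mis-attributed. You credit the chiral block structure of \(H^z\) with making the bounds survive down to \(\eta\gtrsim n^{-2}\), far below the eigenvalue spacing. In fact the role of the block structure is more modest: it forces \(m^z(\ii\eta)\) to be purely imaginary, collapses the MDE to the scalar equation~\eqref{eq:smde}, and yields the uniform stability~\eqref{Mbound} in the bulk \(\abs{z}\le1-\tau\). The cumulant-expansion and fluctuation-averaging machinery, though, only functions down to \(\eta\gtrsim n^{-1+\xi}\); below that the error terms overwhelm the signal, so there is no ``cancellation in \(\cE^z\)'' to speak of. The actual mechanism for continuing to \(\eta>n^{-2}\), as implemented in~\cite[Appendix A]{1908.00969}, is the deterministic monotonicity of \(\eta\mapsto\eta\,\Im\langle \vx, G^z(\ii\eta)\vx\rangle=\sum_i \eta^2\abs{\langle\vx,v_i\rangle}^2/(\lambda_i^2+\eta^2)\) in \(\eta\): the bound established at \(\eta_*=n^{-1+\xi}\) propagates to all smaller \(\eta\) with a loss of a factor \(\eta_*/\eta\), which is exactly what the \(1/(n\eta)\) terms in~\eqref{entry} and~\eqref{ave} accommodate. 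It is also worth being clear that in this regime the ``local law'' is not a concentration statement at all: at \(\eta=n^{-2}\) the right-hand side of~\eqref{ave} is of order \(n^{1+\xi}\), enormously larger than \(\norm{M^z}=\cO(1)\). It is purely an a priori upper bound, used in this paper only to control \(I_0^{\eta_0}\) and the Lemma~\ref{lem:hpbound} high-probability estimates. Your sketch should separate these two regimes explicitly — genuine local law with fluctuation estimates for \(\eta\gtrsim n^{-1+\xi}\), deterministic monotone propagation plus a grid/union-bound argument for \(n^{-2}<\eta<n^{-1+\xi}\) and for uniformity in \(z\).
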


This proposition was proved in~\cite[Theorem 5.2]{1907.13631}; see also~\cite[Appendix A]{1908.00969}
to extend the result in~\cite{1907.13631} to hold simultaneously in all \(\abs{z}<1-\tau\) and \(\eta>n^{-2}\). The averaged local law in~\eqref{ave} and the entry-wise local law (choosing \(\vx \) and \(\vy \) being the coordinate vectors in~\eqref{entry}) have been proven earlier in~\cite[Theorem 5.2]{MR3770875} (see also~\cite[Theorem 3.4]{MR3230002} for \(R=I\)).
In~\cite[Theorem 3.1]{2002.02438} we extended the local law away from the imaginary axis.

The second input is a lower tail estimate on the lowest  singular value of \(X-z\) to control the very small \(\eta\ll n^{-1}\) regime in~\eqref{girko2}.

\begin{proposition}[Tail estimate for \(\lambda_1^z\)]\label{cor:ssvz} 
    Fix \(\tau>0\) and consider \(z\in\C\) with \(\abs{z}\le1-\tau\). Then for any \(L>0\) the smallest singular value \(\lambda_1^z\) of \(X-z\) satisfies
    \begin{equation}\label{eq:boundsmeg}
        \Prob\left( \lambda_1^z\le n^{-1-L}\right)\lesssim n^{-L/2}.
    \end{equation}
\end{proposition}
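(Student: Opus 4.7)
The plan is to prove the tail bound via a Markov-type inequality applied to $\Im \Tr G^z(\ii \eta)$ at the scale $\eta:=n^{-1-L}$, which reduces the task to a uniform Wegner-type estimate on the expectation of the trace of the Hermitized resolvent. Since $H^z$ has eigenvalues $\pm \lambda_i^z$, one has
\[
\Im \Tr G^z(\ii \eta) = \sum_{i=1}^n \frac{2\eta}{(\lambda_i^z)^2 + \eta^2} \ge \frac{2\eta}{(\lambda_1^z)^2 + \eta^2},
\]
and on the event $\set{\lambda_1^z \le \eta}$ the right-hand side is at least $1/\eta$, so that $\eta \Im \Tr G^z(\ii \eta) \ge 1$ there. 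Since the square root preserves the inequality $\ge 1$, this yields the pointwise bound $\1(\lambda_1^z \le \eta) \le \sqrt{\eta \Im \Tr G^z(\ii \eta)}$. Taking expectation and applying Jensen's inequality to the concave square root gives
\[
\Prob(\lambda_1^z \le n^{-1-L}) \le \sqrt{n^{-1-L} \cdot \E\bigl[\Im \Tr G^z(\ii n^{-1-L})\bigr]}.
\]
The stated bound $n^{-L/2}$ is therefore equivalent to the uniform estimate $\E[\Im \Tr G^z(\ii \eta)] \lesssim_\tau n$ for all $\eta>0$, which expresses the boundedness of the expected local density of singular values of $X-z$ near the origin.

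For $\eta \ge n^{-1+c}$ with any $c>\xi$ this bound follows directly from the averaged local law \eqref{ave}: the very-high-probability bound $\abs{\braket{\Im G^z(\ii\eta)} - \Im m^z(\ii\eta)} \lesssim n^{\xi-c}$ together with the deterministic $\braket{\Im G^z(\ii\eta)} \le 1/\eta$ on the exceptional set of probability $\le n^{-D}$ yields $\E[\braket{\Im G^z(\ii\eta)}] \lesssim_\tau 1$ provided $D$ is chosen sufficiently large. Below this scale, however, the monotonicity of $\eta \mapsto \eta \braket{\Im G^z(\ii \eta)}$ only transfers the bound in the inefficient form $\E[\Im \Tr G^z(\ii\eta)] \lesssim n^c/\eta$, which when combined with Jensen produces merely the trivial $O(n^{c/2})$ estimate.

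The main obstacle is therefore the sub-microscopic regime $\eta \sim n^{-1-L} \ll n^{-1}$, where the $n^\xi$ error from the local law is larger than the target. To close the argument I would invoke an independent anti-concentration estimate for $\lambda_1^z$, for instance a Rudelson-Vershynin-type inequality of the form $\Prob(\lambda_1^z \le \epsilon) \lesssim \sqrt{n}\,\epsilon$ extended to the moment-bounded setting of Assumption~\ref{ass:a}, or equivalently the dedicated least-singular-value estimates for the shifted ensemble in~\cite{2002.02438} transferred to the iid case via moment matching. Such an input directly controls the expected density of the singular values of $X-z$ near zero and supplies the missing Wegner-type bound, thereby closing the argument; it plays a role exactly analogous to that of~\cite{1908.01653} in the edge universality proof of~\cite{1908.00969}.
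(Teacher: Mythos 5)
Your Markov--Jensen reduction, while formally correct, is a detour that does not actually close the argument, and the pivotal claim on which it rests is false. The inequality \(\1(\lambda_1^z\le\eta)\le\sqrt{\eta\,\Im\Tr G^z(\ii\eta)}\) and the subsequent Jensen step are fine, but the assertion that \(\Prob(\lambda_1^z\le n^{-1-L})\lesssim n^{-L/2}\) is \emph{equivalent} to the Wegner-type bound \(\E[\Im\Tr G^z(\ii\eta)]\lesssim n\) for all \(\eta>0\) is wrong: the Wegner bound implies the tail bound (this is your Jensen step), but the converse fails. A layer--cake computation shows that a tail bound of strength \(\Prob(\lambda_1^z\le\epsilon)\lesssim n^{1/4}\epsilon^{1/2}\) -- which is exactly what~\eqref{eq:boundsmeg} encodes -- yields only \(\E[\Im\Tr G^z(\ii\eta)]\lesssim n^{5/4}\eta^{-1/2}\), not \(\lesssim n\), at sub-microscopic \(\eta\). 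Moreover, the Wegner bound cannot hold unconditionally for all \(\eta>0\): if the entry distribution has atoms, \(\Prob(\lambda_1^z=0)\) can be positive, in which case \(\E[\Im\Tr G^z(\ii\eta)]\) diverges as \(\eta\to 0\). So you are reducing the proposition to a statement that is strictly stronger and in general false.

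Your escape route -- ``invoke an independent anti-concentration estimate of the form \(\Prob(\lambda_1^z\le\epsilon)\lesssim\sqrt{n}\,\epsilon\)'' -- is circular in the relevant sense: if you had such a bound you could plug in \(\epsilon=n^{-1-L}\) and read off \(\Prob(\lambda_1^z\le n^{-1-L})\lesssim n^{-1/2-L}\), which is stronger than the claimed \(n^{-L/2}\) and renders the entire resolvent/Jensen machinery pointless. In fact, the reverse route (resolvent bound from anti-concentration from resolvent bound) never terminates. The bound you propose to cite is also not available in the required generality: under only the finite-moment Assumption~\ref{ass:a}, the shifted i.i.d.\ matrix \(X-z\) does not satisfy a Rudelson--Vershynin bound of the form \(\sqrt{n}\,\epsilon\) without additional structural hypotheses, and your suggestion to transfer~\cite{2002.02438} ``via moment matching'' to the general i.i.d.\ case is not a statement that currently exists in the literature (the paper is explicit that this transfer, via~\cite{MR3916329} and~\cite{2002.02438}, only covers small \(L\)).

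The paper's actual proof is a one-line citation: \eqref{eq:boundsmeg} follows from the dedicated least-singular-value result~\cite[Theorem~3.2]{TaoVusmooth} for shifted i.i.d.\ matrices under moment-only assumptions, combined with the operator norm bound \(\lVert X\rVert\lesssim 1\) with very high probability (needed so that the shift \(z\) remains inside a bounded operator); this is precisely what produces the \(n^{-L/2}\) rate. Two alternatives are recorded: under the mild density assumption~\eqref{eq:reg}, \cite[Proposition~5.7]{MR3770875} gives~\eqref{eq:b}, which suffices after taking \(l\) large; and for small \(L\) only, one can combine the Ginibre bound~\cite[Eq.~(4a)]{1908.01653} with the singular-value universality of~\cite{MR3916329,2002.02438}. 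Your resolvent-based framing correctly identifies the sub-microscopic regime as the obstruction, but that obstruction is precisely why the paper does not argue through the resolvent here and instead cites a result proved by other (non-spectral) means.
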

Proposition~\ref{cor:ssvz} follows from~\cite[Theorem 3.2]{TaoVusmooth}, and the crude upper bound \(\lVert X\rVert\lesssim 1\) with very high probability (e.g.\ see~\cite[Eq. (2.8)]{MR3770875}).
Alternatively,~\eqref{eq:boundsmeg} also follows by~\cite[Proposition 5.7]{MR3770875} (which is an adaptation of~\cite[Lemma 4.12]{MR2908617}), without recurring to the quite sophisticated proof of~\cite[Theorem 3.2]{TaoVusmooth}, under an additional 
very mild regularity  assumption, namely 
that there exist \(\alpha,\beta>0\) such that \(\chi\), the rescaled entry of the matrix \(X\), has a density \(g\colon\C\to [0,+\infty)\) satisfying
\begin{equation}\label{eq:reg}
    g\in L^{1+\alpha}(\C), \qquad \lVert g\rVert_{L^{1+\alpha}(\C)}\lesssim n^\beta.
\end{equation}
The bound in~\eqref{eq:boundsmeg} will be used twice in our  proof. First, in Lemma~\ref{lem:boundseg}, we use~\eqref{eq:boundsmeg} for some very  large \(L=l-1>0\) to ensure that with very high probability there are no singular values of \(X-z\) very close to zero, i.e.\ that 
\begin{equation}\label{eq:smass}
    \Prob\left(\lambda_1^z \le n^{-l} \right)\lesssim n^{-(l-1)/2}
\end{equation}
for any \(l>1\) uniformly in \(\abs{z}\le 1-\tau\). Second, we will use~\eqref{eq:boundsmeg} for \(L=\delta\), for some small \(\delta>0\) both in Lemma~\ref{lem:hpbound} and
Lemma~\ref{lem:boundseg} to control the  regime \(\lambda_1^z\in [n^{-l}, n^{-1-\epsilon}]\). We  remark that~\eqref{eq:boundsmeg} for some small \(L=\delta\)  can also be proven with the following argument
that neither relies on~\cite[Theorem 3.2]{TaoVusmooth} nor assumes~\eqref{eq:reg}.  First notice that~\cite[Eq. (4a)]{1908.01653}  proves~\eqref{eq:boundsmeg} with a small \(L\) for the smallest singular value of Ginibre matrices. 
Then we can combine this bound with~\cite[Theorem 3.2]{MR3916329} in the complex case and~\cite[Theorem 2.8]{2002.02438} in the real case, to ensure that the same bound holds for i.i.d.\ matrices \(X\) with arbitrary distribution for \(\chi\).

\section{Proof of Theorem~\ref{pro:wcp}}\label{sec:pr}
In this section we start with some \emph{a priori} bounds in Girko's formula and then we conclude it with the proof of Theorem~\ref{pro:wcp}. From now on we fix the scales
\begin{equation}\label{eq:defsc}
    \eta_0:= n^{-1-\epsilon}, \qquad \eta_1:= n^{-1+\epsilon}, \qquad T=n^{100},
\end{equation}
for some small fixed \(\epsilon>0\). We split the \(\eta\)-integration
in Girko's formula~\eqref{girko} for the rescaled test functions \(g_{z_0}(z)=ng(\sqrt{n}(z-z_0))\) as 
\begin{equation}  
    \label{eq:bigdiv}
    \begin{split} 
        \frac{1}{n}\sum_{i=1}^n g_{z_0}(\sigma_i)-  \frac{1}{\pi}\int_\DD  g_{z_0}(z)\diff^2 z &= \frac{1}{4\pi n}\int_\C   \Delta g_{z_0}(z)\log\abs*{\det (H^z-\ii T)}\diff^2 z \\
        &\quad -\frac{1}{2\pi}\int_\C   \Delta g_{z_0}(z)\int_0^{\eta_0} \braket{\Im G^z(\ii \eta)-\Im m^z(\ii \eta)} \diff \eta\diff^2 z \\
        &\quad -\frac{1}{2\pi}\int_\C   \Delta g_{z_0}(z)\int_{\eta_0}^{\eta_1} \braket{\Im G^z(\ii \eta)-\Im m^z(\ii \eta)} \diff \eta\diff^2 z \\
        &\quad -\frac{1}{2\pi}\int_\C   \Delta g_{z_0}(z)\int_{\eta_1}^{T} \braket{\Im G^z(\ii \eta)-\Im m^z(\ii \eta)} \diff \eta\diff^2 z \\
        &\quad +\frac{1}{2\pi}\int_\C   \Delta g_{z_0}(z)\int_T^{+\infty} \left( \Im m^z(\ii\eta)-\frac{1}{\eta+1}\right) \diff \eta\diff^2 z \\
        &=: J_T(g_{z_0})+I_0^{\eta_0}(g_{z_0})+I_{\eta_0}^{\eta_1}(g_{z_0})+I_{\eta_1}^T(g_{z_0})+I_T^{\infty}(g_{z_0}), 
    \end{split} 
\end{equation}
with \(\eta_0, \eta_1\), and \(T\) defined in~\eqref{eq:defsc}, so that \(\cI_\epsilon=I_{\eta_0}^{\eta_1}\). 

We split~\eqref{eq:bigdiv} into several integrals since the different regimes will be treated using different techniques. In particular, \(I_0^{\eta_0}\) is estimated using the lower tail bound in~\eqref{eq:boundsmeg} for the smallest eigenvalue (in absolute value) of \(H^z\); the integral \(I_{\eta_1}^T\) is estimated analysing the \(z\)-dependence of \(\braket{\Im G^z(\ii\eta)}\); finally, the integrals \(J_T\) and \(I_T^\infty\) are estimated by easy direct computations. This will show that the main contribution comes from the regime \(I_{\eta_0}^{\eta_1}\).

We start with giving \emph{a priori} bounds for the integrals in~\eqref{eq:bigdiv}. %
\begin{lemma}\label{lem:hpbound}
    It holds
    \begin{equation}\label{eq:easbound}
        \abs{J_T}\lesssim \frac{n^{1+\xi} \norm{\Delta g}_1}{T^2}, \quad \abs{I_0^{\eta_0}} +\abs{I_{\eta_0}^{\eta_1}}+\abs{I_{\eta_1}^T}\lesssim n^\xi\norm{\Delta g}_1,\quad \abs*{I_T^{\infty}}\lesssim \frac{n\norm{\Delta g}_1}{T},
    \end{equation}
    with very high probability for any \(\xi>0\).
\end{lemma}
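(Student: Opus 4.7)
I would bound each of the five integrals in~\eqref{eq:bigdiv} separately, with a technique adapted to its $\eta$-range. A common thread is the scaling identity \(\norm{\Delta g_{z_0}}_1 = n\norm{\Delta g}_1\), which follows from \(g_{z_0}(z)=n\,g(\sqrt n(z-z_0))\) by the chain rule; for each term, the plan is to exhibit enough decay in the inner $\eta$-integral to absorb this extra factor of $n$.

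The two boundary terms are essentially deterministic. For $J_T$, I would decompose \(\log\abs{\det(H^z-\ii T)}=2n\log T+\sum_k\log(1+(\lambda_k^z)^2/T^2)\): the $z$-independent piece is killed by \(\int_\C\Delta g_{z_0}=0\), while for the remainder I apply \(\log(1+x)\le x\) together with \(\sum_k(\lambda_k^z)^2=\Tr((X-z)^\ast(X-z))\lesssim n\), which holds with very high probability on the (compact) support of $\Delta g_{z_0}$ via the standard norm bound \(\norm{X}\lesssim 1\). For $I_T^\infty$ the Dyson equation~\eqref{eq:smde} yields \(\Im m^z(\ii\eta)=\eta^{-1}+O(\eta^{-3})\) as $\eta\to\infty$; the subtracted $1/(\eta+1)$ is precisely the counterterm matching the $\eta^{-1}$ part, so the integrand decays like $\eta^{-2}$ and the tail integral is $O(1/T)$.

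In the intermediate regimes $I_{\eta_0}^{\eta_1}$ and $I_{\eta_1}^T$, the averaged local law~\eqref{ave} with $R=I$ immediately gives \(\abs{\braket{\Im G^z(\ii\eta)-\Im m^z(\ii\eta)}}\le n^\xi/(n\eta)\) with very high probability, uniformly in $\eta\ge\eta_0$. Integrating in $\eta$ introduces only a $\log n$ factor since \(\int_{\eta_0}^{T}d\eta/(n\eta)\lesssim n^{-1}\log n\), and multiplying by \(n\norm{\Delta g}_1\) produces the claimed bound after absorbing $\log n$ into $n^\xi$.

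The main obstacle is $I_0^{\eta_0}$, which reaches below the scale on which~\eqref{ave} is informative. Exploiting the $\pm\lambda_k^z$ eigenvalue structure of $H^z$ I would evaluate the $\eta$-integral exactly as
\[\int_0^{\eta_0}\braket{\Im G^z(\ii\eta)}\,d\eta = \frac{1}{2n}\sum_{k=1}^n\log\Bigl(1+\eta_0^2/(\lambda_k^z)^2\Bigr),\]
and split the sum at a cutoff $K=n^{2\xi}$. For $k\le K$ Proposition~\ref{cor:ssvz} with a large parameter $L$ ensures \(\lambda_k^z\ge\lambda_1^z\ge n^{-l}\) with very high probability, so each such term is $O(\log n)$ and the partial contribution is $O(n^{2\xi-1}\log n)$ after the $1/(2n)$ prefactor. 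For $k>K$ the plan is to invoke the rigidity bound \(\lambda_k^z\gtrsim k/n\), derived from~\eqref{ave} at scale $\eta\sim n^{-1+\xi}$ by a standard counting argument; then \(\log(1+\eta_0^2/(\lambda_k^z)^2)\lesssim(n\eta_0/k)^2=n^{-2\epsilon}/k^2\) and the tail of the sum is negligible. Together with the trivial estimate \(\int_0^{\eta_0}\Im m^z\,d\eta\lesssim\eta_0\), this yields \(\abs{I_0^{\eta_0}}\lesssim n^\xi\norm{\Delta g}_1\) after multiplication by \(\norm{\Delta g_{z_0}}_1\). Uniformity in $z$ across the \(n^{-1/2}\)-support of \(\Delta g_{z_0}\) will be ensured by the joint-in-$z$ statement of Proposition~\ref{prop:locallaw} combined with a routine grid argument based on the $O(1)$-Lipschitz dependence of singular values on $z$; the most delicate point of the entire argument is the rigidity input for the tail of the sum in $I_0^{\eta_0}$, which sits right at the borderline of what~\eqref{ave} delivers and must be coupled with the lower tail bound of Proposition~\ref{cor:ssvz} to handle the very smallest singular values.
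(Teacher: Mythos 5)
Your proposal is correct and in substance follows the same route as the paper, which disposes of $J_T$ and $I_T^\infty$ by citing~\cite[Proof of Theorem 2.5]{MR3770875}, of the two middle regimes by the averaged local law~\eqref{ave}, and of $I_0^{\eta_0}$ by~\cite{TaoVusmooth} plus a grid argument (with the detailed version of that last bound given in Lemma~\ref{lem:boundseg}); you have simply unpacked the cited steps into self-contained elementary arguments, and your determinant decomposition for $J_T$, the large-$\eta$ expansion of $\Im m^z$ for $I_T^\infty$, and the exact $\log(1+\eta_0^2/\lambda_k^2)$ formula for $I_0^{\eta_0}$ all check out. One small inaccuracy worth correcting: the rigidity estimate $\lambda_k^z\gtrsim k/n$ is not obtained by applying~\eqref{ave} at the single scale $\eta\sim n^{-1+\xi}$ (that only controls the number of $\lambda_j$ below $n^{-1+\xi}$); you must apply the counting argument at the $k$-dependent scale $\eta\sim k/n$ for each $k\ge n^\xi$, which is legitimate since Proposition~\ref{prop:locallaw} holds simultaneously for all $\eta> n^{-2}$. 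Finally, be aware that passing from the polynomial tail in Proposition~\ref{cor:ssvz} to a uniform-in-$z$ lower bound on $\lambda_1^z$ via a union bound over a grid is genuinely delicate (the grid has polynomially many points, competing against a polynomial tail), which is why the paper also records the alternative route via the regularity assumption~\eqref{eq:reg} and~\cite[Proposition 5.7]{MR3770875}.
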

\begin{proof}
    The bound for \(\abs{I_{\eta_0}^{\eta_1}}, \abs{I_{\eta_1}^T}\) follows by the local law for \(H^z\) in Proposition~\ref{prop:locallaw}. Using the bounds proven in~\cite[Proof of Theorem 2.5]{MR3770875}  we conclude the bounds of \(\abs{J_T}, \abs{I_T^\infty}\). By~\cite[Theorem 3.2]{TaoVusmooth} and a grid argument in \(z\) it follows that the bound for \(I_0^{\eta_0}\) in~\eqref{eq:easbound} holds on
     a very high probability set (see below~\eqref{eq:estsmeig1} for more details about this argument). Alternatively, 
     under the additional smoothness assumption~\eqref{eq:reg} the bound of \(\abs{I_0^{\eta_0}}\) also
     follows as in~\cite[Proof of Theorem 2.5]{MR3770875} directly without additional grid-argument.
\end{proof}

Next, for \(I_0^{\eta_0}\) and \(I_{\eta_1}^T\) we have improved bounds holding in expectation which allow to conclude Theorem~\ref{pro:wcp} and Proposition~\ref{prop reduction}.
\begin{lemma}\label{lem:boundseg}  
    For \(I_0^{\eta_0}\) with \(\eta_0=n^{-1-\epsilon}\) and for any \(\epsilon>0\), we have
    \[   \E \abs*{ I_0^{\eta_0}} \lesssim n^{-\epsilon/4} \norm{\Delta g}_1. \]
\end{lemma}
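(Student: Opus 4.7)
The plan is to reduce $\E |I_0^{\eta_0}|$ to a pointwise-in-$z$ estimate and analyse the explicit resolvent identity
\[
\int_0^{\eta_0}\braket{\Im G^z(\ii\eta)}\,\diff\eta = \frac{1}{2n}\sum_{j=1}^n\log\!\left(1+\frac{\eta_0^2}{(\lambda_j^z)^2}\right),
\]
where $\lambda_j^z>0$ are the singular values of $X-z$. The rescaling $w=\sqrt{n}(z-z_0)$ gives $\int_\C \abs{\Delta g_{z_0}(z)}\,\diff^2 z = n\norm{\Delta g}_1$, while~\eqref{Mbound} yields $\int_0^{\eta_0}\Im m^z(\ii\eta)\,\diff\eta \lesssim \eta_0 = n^{-1-\epsilon}$. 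It therefore suffices to prove
\[
\sup_{\abs{z}\le 1-\tau} \E \left[\frac{1}{2n}\sum_j \log\!\left(1 + \frac{\eta_0^2}{(\lambda_j^z)^2}\right)\right] \lesssim n^{-1-\epsilon/4},
\]
after which the bound follows by integrating out $\abs{\Delta g_{z_0}}$.

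Fix a small $\xi>0$ and set $\eta^\ast:=n^{-1+\xi}$. Proposition~\ref{prop:locallaw} combined with~\eqref{Mbound} implies that with very high probability $\braket{\Im G^z(\ii\eta^\ast)} = \landauO{1}$ and that the counting function satisfies $N^z:=\#\{j:\lambda_j^z \le \eta^\ast\} \lesssim n^{2\xi}$ (via the standard bound $N^z \le Cn\eta^\ast \braket{\Im G^z(\ii\eta^\ast)}$). Split the sum into \textbf{bulk} indices ($\lambda_j^z > \eta^\ast$) and \textbf{small} indices ($\lambda_j^z \le \eta^\ast$). For the bulk contribution, the inequalities $\log(1+x) \le x$ and $(\lambda_j^z)^{-2} \le 2((\lambda_j^z)^2 + (\eta^\ast)^2)^{-1}$ give
\[
\frac{\eta_0^2}{2n}\sum_{\lambda_j^z > \eta^\ast} \frac{1}{(\lambda_j^z)^2} \le \frac{\eta_0^2}{\eta^\ast}\braket{\Im G^z(\ii\eta^\ast)} \lesssim \frac{\eta_0^2}{\eta^\ast} = n^{-1-2\epsilon-\xi},
\]
which is far below the target.

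For the contribution of the $\lesssim n^{2\xi}$ small singular values I partition the probability space using Proposition~\ref{cor:ssvz}. Let $E_0 := \{\lambda_1^z > n^{-1-\epsilon/2}\}$; by~\eqref{eq:boundsmeg} applied with $L=\epsilon/2$, $\Prob(E_0^c) \lesssim n^{-\epsilon/4}$. On $E_0$ every $\lambda_j^z \ge \lambda_1^z > n^{-1-\epsilon/2}$, hence $\log(1+\eta_0^2/(\lambda_j^z)^2) \le \eta_0^2/(\lambda_j^z)^2 \le n^{-\epsilon}$ and the small-index contribution is at most $Cn^{-1-\epsilon+2\xi}$ pointwise. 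On $E_0^c$ I use the crude bound $\lambda_1^z \ge n^{-l}$ for some large fixed $l$, valid with probability $\ge 1-n^{-(l-1)/2}$ by~\eqref{eq:smass}, to obtain $\log(1+\eta_0^2/(\lambda_j^z)^2) \lesssim l\log n$; this gives a pointwise small-index contribution $\le Cn^{-1+2\xi}l\log n$. Multiplying by $\Prob(E_0^c) \lesssim n^{-\epsilon/4}$ and taking $\xi$ sufficiently small produces the target bound. The residual event on which Proposition~\ref{prop:locallaw} or the crude lower bound on $\lambda_1^z$ fails has probability $n^{-D}$ for arbitrarily large $D$ and contributes negligibly since the integrand is deterministically polynomial in $n$ once any lower bound on $\lambda_1^z$ is imposed.

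The main obstacle is that $\eta_0 = n^{-1-\epsilon}$ lies strictly below the scale $n^{-1}$ where Proposition~\ref{prop:locallaw} controls $\braket{\Im G^z(\ii\eta)}$ directly, so one cannot simply integrate a local-law bound on $(0,\eta_0)$. The argument circumvents this by using the exact logarithmic identity above to reduce the $\eta$-integral to a sum over singular values, transferring local-law information from $\eta^\ast$ down to $\eta_0$ via the monotonicity of $\eta \mapsto \eta\braket{\Im G^z(\ii\eta)}$ for the bulk part, and invoking the quantitative tail estimate~\eqref{eq:boundsmeg} to handle the rare event of an abnormally small $\lambda_1^z$.
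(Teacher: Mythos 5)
Your overall strategy coincides with the paper's: you convert the \(\eta\)-integral into the logarithmic sum \(\sum_j\log(1+\eta_0^2/(\lambda_j^z)^2)\), split by the size of the singular values, treat the "bulk" singular values via \(\log(1+x)\le x\) and the averaged local law at a scale \(\sim n^{-1}\), and control the expected count and size of the handful of smallest singular values with Proposition~\ref{cor:ssvz} at \(L=\epsilon/2\). Up to the precise choice of intermediate thresholds this is the same decomposition as in the paper's \eqref{eq:smallsplit}--\eqref{eq:usefsplit}, and those parts of your argument are fine.

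There is, however, a genuine gap in how you dispose of the residual event \(\{\lambda_1^z<n^{-l}\}\). You write that this event "has probability \(n^{-D}\) for arbitrarily large \(D\) and contributes negligibly since the integrand is deterministically polynomial in \(n\) once any lower bound on \(\lambda_1^z\) is imposed." This is circular: on precisely that event no lower bound on \(\lambda_1^z\) is available, so you have no deterministic bound on \(\sum_j\log(1+\eta_0^2/(\lambda_j^z)^2)\) to multiply by the small probability. Worse, for discrete \(\chi\) the event \(\lambda_1^z=0\) can occur with positive probability, in which case the \(\eta\)-integral is genuinely \(+\infty\) and no probability bound alone saves you. The paper closes this gap in two ways: (a) it explicitly integrates the tail, \(\E[|\log\lambda_1|\,\1(\lambda_1\le n^{-l})]=\int_{l\log n}^\infty\Prob(\lambda_1\le e^{-s})\,\diff s\), feeding in \eqref{eq:boundsmeg} (or \eqref{eq:b} under the extra regularity \eqref{eq:reg}); and (b) for the discrete case it observes that \(I_0^{\eta_0}\) is, by the algebraic identity \eqref{eq:bigdiv}, the difference of quantities that are each deterministically \(O(n^2)\) once \(|\chi|\le n\) is enforced by a trivial moment cutoff, and uses a grid argument to get a genuinely high-probability event \(\Sigma\) on which \(\lambda_1^z\ge n^{-l}\) holds uniformly in \(z\). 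Without one of these devices your bound on the small-index contribution is not actually established in expectation.
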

\begin{proposition}\label{prop I eta1 T}
    For \(I_{\eta_1}^T\) with \(\eta_1=n^{-1+\epsilon}\) and any \(\epsilon>0\) we have
    \[\E\abs{I_{\eta_1}^T} \lesssim n^{-\epsilon/4} \norm{\Delta g}_1. \]
\end{proposition}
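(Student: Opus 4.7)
The idea is to exploit that, for $\eta\ge \eta_1=n^{-1+\epsilon}$, the map $z\mapsto T(z,\eta):=\braket{\Im G^z(\ii\eta)-\Im m^z(\ii\eta)}$ varies slowly on the $n^{-1/2}$-scale where $g_{z_0}$ is supported. Concretely, since $g_{z_0}$ is compactly supported and $z\mapsto z-z_0$ and $z\mapsto \overline{z-z_0}$ are harmonic, integration by parts gives
\[
\int_\C \Delta g_{z_0}(z)\diff^2z = 0, \qquad \int_\C (z-z_0)^\alpha \overline{(z-z_0)}^\beta \Delta g_{z_0}(z)\diff^2z = 0 \quad (\alpha+\beta\le 1).
\]
A second-order Taylor expansion of $T(\cdot,\eta)$ around $z_0$ therefore cancels the constant and linear parts, leaving only the quadratic remainder. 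Using $\|\Delta g_{z_0}\|_1 = n\|\Delta g\|_1$ together with $|z-z_0|^2\lesssim n^{-1}$ on $\supp g_{z_0}$, this yields
\[
\Bigl|\int_\C \Delta g_{z_0}(z) T(z,\eta)\diff^2 z\Bigr| \lesssim \|\Delta g\|_1 \sup_{|z-z_0|\lesssim n^{-1/2}}\|D_z^2 T(z,\eta)\|,
\]
so it suffices to prove $\E\int_{\eta_1}^T \sup_z \|D_z^2 T(z,\eta)\|\diff\eta \lesssim n^{-\epsilon/4}$.

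Next, the $z$-derivatives will be computed explicitly: $\partial_z G^z = G^z E G^z$ and $\partial_{\bar z}G^z = G^z E^* G^z$, where $E$ is the constant bounded off-diagonal $2n\times 2n$ block encoding $\partial_z H^z$; analogous formulas for $\partial_z M^z,\partial_{\bar z}M^z$ follow by differentiating the MDE for $M^z$. Iterating, $D_z^2 T(z,\eta)$ is recast as an averaged three-resolvent quantity of the schematic form
\[
\braket{G^z A G^z B G^z - (\text{deterministic }M\text{-product with MDE corrections})}
\]
with bounded matrices $A,B$. I would then apply the improved three-resolvent averaged local law from~\cite{NonhermitianGGG} to bound this fluctuation by $n^\xi/(n\eta^\alpha)$ on a very-high-probability event $\Omega$, with an exponent $\alpha$ small enough that the $\eta$-integral on $[\eta_1,T]$ is bounded by a negative power of $n\eta_1=n^\epsilon$. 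On the complement $\Omega^c$, I would use the deterministic bound $\|G^z(\ii\eta)\|\le \eta^{-1}$ to estimate $|I_{\eta_1}^T|\lesssim n^{O(1)}\|\Delta g\|_1$; since $\Prob(\Omega^c)\le n^{-D}$ for any $D$, this contribution is negligible in expectation.

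The main obstacle is precisely the three-resolvent bound. The naïve estimate $\|G^z(\ii\eta)\|\le \eta^{-1}$ only delivers $\|D_z^2 T\|\lesssim \eta^{-2}$, whose integral from $\eta_1$ to $1$ scales like $\eta_1^{-1}=n^{1-\epsilon}$, which overwhelms the $n^{-1}$ gained from the Taylor remainder. The improvement is extracting an additional factor of $(n\eta)^{-1/2}$ (or similar) from the cancellations built into the deterministic subtraction in the three-resolvent local law of~\cite{NonhermitianGGG}; this is the genuinely non-trivial input. Once such a bound is in hand, the remaining $\eta$-integration and the splitting into $\Omega,\Omega^c$ are straightforward, and the final factor $n^{-\epsilon/4}$ comes out with room to spare.
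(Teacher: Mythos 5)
Your cancellation mechanism is the right one, but the second‐order Taylor expansion does not pay off and the quantitative claim in the last paragraph is incorrect. The paper takes a genuinely different route: it bounds \(\E\abs{I_{\eta_1}^T}^2\) as a double integral over \((z,\eta)\) and \((z',\eta')\), Taylor-expands \(\braket{\Im G^{z'}-\Im m^{z'}}\) in \(z'\) around \(z\) (not \(z_0\)) only to \emph{first} order, kills the zeroth-order term using \(\int\Delta g_{z_0}=0\), and then estimates the remaining term by the averaged local law \(\lesssim n^\xi/(n\eta)\) on the undifferentiated factor together with the \emph{two}-resolvent derivative bound~\eqref{GFG local law}, \(\abs{\partial_z\braket{G^z-m^z}}\lesssim n^\xi/(n\eta^{3/2})\), on the other. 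This only needs a one-derivative local law, and the \(\eta,\eta'\) integrals give \(\frac{\log n}{n}\cdot\frac{1}{n\sqrt{\eta_1}}\), yielding \(\E\abs{I_{\eta_1}^T}^2\lesssim n^{2\xi-\epsilon/2}\norm{\Delta g}_1^2\).

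The gap in your proposal is concrete and quantitative. After killing the constant and linear terms you reduce to
\[
\abs{I_{\eta_1}^T}\lesssim \norm{\Delta g}_1\int_{\eta_1}^T\sup_z\norm{D_z^2 T(z,\eta)}\diff\eta ,
\]
and you propose that a factor \((n\eta)^{-1/2}\) on top of the trivial \(\eta^{-2}\) would suffice. It does not: with \(\norm{D^2_z T}\lesssim \eta^{-2}(n\eta)^{-1/2}=n^{-1/2}\eta^{-5/2}\) one gets
\[
\int_{\eta_1}^T n^{-1/2}\eta^{-5/2}\diff\eta\sim n^{-1/2}\eta_1^{-3/2}=n^{1-3\epsilon/2},
\]
which is larger by roughly \(n\) than the target \(n^{-\epsilon/4}\). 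So your claim that the bound ``comes out with room to spare'' is false; to make the second-order route close you would need a full factor-\(n^{-1}\) improvement, i.e.\ \(\norm{D^2_z T}\lesssim n^\xi/(n\eta^2)\), a three-resolvent averaged local law that is \emph{not} what the \((n\eta)^{-1/2}\)-type fluctuation bound gives and is not provided by~\cite{NonhermitianGGG}. The underlying reason the higher Taylor order hurts is that each extra factor \(\abs{z-z_0}\) only buys \(n^{-1/2}\), while each extra \(z\)-derivative of \(G^z\) costs an extra \(G\), worsening the \(\eta\)-singularity by roughly \(\eta^{-1/2}\) and hence the integral over \([\eta_1,T]\) by \(\eta_1^{-1/2}\sim n^{1/2}\) — a net loss.

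A correct variant closest to your setup would be to stop at first order: cancel only the constant via \(\int\Delta g_{z_0}=0\), bound the remainder by \(\abs{z-z_0}\sup\abs{\nabla_z\braket{G^z-m^z}}\lesssim n^{-1/2}\,n^\xi/(n\eta^{3/2})\), and integrate in \(\eta\). This already gives \(\abs{I_{\eta_1}^T}\lesssim n^{\xi-\epsilon/2}\norm{\Delta g}_1\) with very high probability, and the expectation bound follows by the standard low-probability/deterministic-bound splitting you outline; it uses exactly the two-resolvent input the paper uses, just without squaring.
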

\begin{proof}[Proof of Theorem~\ref{pro:wcp}]
    By~\eqref{eq:bigdiv}, Lemmata~\ref{lem:hpbound}--\ref{lem:boundseg}, and Proposition~\ref{prop I eta1 T} we easily conclude Theorem~\ref{pro:wcp}.
\end{proof}
\begin{proof}[Proof of Proposition~\ref{prop reduction}]
    We split each factor on both side of~\eqref{eq:impbound} as in~\eqref{fI}. For the mixed moments involving only factors of \(\cI_\epsilon\) we conclude the approximate equality from~\eqref{eq:c}. For the terms with at least one factor of \(\cE_\epsilon\) we use the high probability bounds from Lemma~\ref{lem:hpbound} together with~\eqref{Eest} to conclude equality.  
\end{proof}

\subsection{Proofs of Lemma~\ref{lem:boundseg} and Proposition~\ref{prop I eta1 T}}
\begin{proof}[Proof of Lemma~\ref{lem:boundseg}]  
    This argument was essentially given in~\cite[Lemmata 2-4]{1908.00969}, we repeat the proof here for completeness. We denote the eigenvalues of \(H^z\) by \(\set{\lambda_{\pm i}^z}_{i\in [n]}\) which are symmetric around \(0\) by block structure of \(H^z\) and, in modulus, agree with the singular values of \(X-z\). For notational simplicity we omit the \(z\)-dependence within the proof of Lemma~\ref{lem:boundseg}. 
    
    We start by splitting the \(\eta\)-integral in \(I_0^{\eta_0}\) as 
    \begin{equation}
        \label{eq:smallsplit}
        \begin{split}
            &  n\int_0^{\eta_0} \Im \braket{G^z(\ii\eta)-M^z(\ii\eta)} \diff \eta \\
            &\qquad =\sum_{\abs{\lambda_i}< n^{-l}}\log\left( 1+\frac{\eta_0^2}{\lambda_i^2}\right) + \sum_{\abs{\lambda_i}\ge n^{-l}}\log\left( 1+\frac{\eta_0^2}{\lambda_i^2}\right)-n\int_0^{\eta_0} \Im \widehat{m}^z(\ii\eta) \diff \eta,
        \end{split}
    \end{equation}
    where \(l\in\N\) is a large fixed positive integer, and \(\eta_0=n^{-1-\epsilon}\). For  Lemma~\ref{lem:boundseg} it is enough to prove that the rhs.\ of~\eqref{eq:smallsplit} is bounded by \(n^{-\epsilon/6+\xi}\) since \(\Delta f\) in \(I_0^{\eta_0}\) is bounded in \(L^1\). Using that \(\abs{m^z(\ii\eta)}\lesssim 1\), the third term in the second line of~\eqref{eq:smallsplit} is bounded by \(n\eta_0=n^{-\epsilon}\). 
    
    For the bounds on the first and second term in~\eqref{eq:smallsplit} we present two proofs; one relying on~\cite{TaoVusmooth}, and one relying on~\cite{MR3770875} under the additional mild moment assumption~\eqref{eq:reg}. For the first term in the rhs.\ of~\eqref{eq:smallsplit} we compute
    \begin{equation}\label{eq:estsmeig1}
        \begin{split}
            \E   \sum_{\abs{\lambda_i}< n^{-l}} \log\left( 1+\frac{\eta_0^2}{\lambda_i^2}\right) &\le n\E  \left[ \log\left( 1+\frac{\eta_0^2}{\lambda_1^2}\right) \1(\lambda_1\le n^{-l})\right]\\
            &\lesssim n\E [\abs{\log \lambda_1} \1(\lambda_1\le n^{-l})] \\
            &=n\int_{l\log n}\Prob(\lambda_1\le e^{-s})  \diff s.
        \end{split}
    \end{equation}
    For discrete random variables \(\chi\) the event \(\lambda_1=0\) might occur with some small but non-zero probability. However, using~\eqref{eq:boundsmeg} and a grid argument in the \(z\)-variable we can guarantee that \(\lambda_1=\lambda_1^z\ge n^{-l}\) holds simultaneously for all \(z\) on a very high probability event \(\Sigma\) with \(\Prob(\Sigma^c)\lesssim n^{-100}\). For the second term on the rhs.\ of~\eqref{eq:smallsplit} we again use~\eqref{eq:boundsmeg} to conclude
    \begin{equation}\label{eq:estsmeig2}
        \E \abs*{ \set*{i\given\abs{\lambda_i}\le n^{\epsilon/2}\eta_0} }\lesssim n^{-\epsilon/4}
    \end{equation}
    and thereby, using~\eqref{eq:estsmeig2}, and \(\log (1+x)\le x\),
    \begin{equation}\label{eq:usefsplit}
        \begin{split}
            \E \sum_{\abs{\lambda_i}\ge n^{-l}}\log\left( 1+\frac{\eta_0^2}{\lambda_i^2}\right)&=  \E\sum_{ n^{-l}\le \abs{\lambda_i}\le n^{\epsilon/2} \eta_0}\log\left( 1+\frac{\eta_0^2}{\lambda_i^2}\right)+
            \E  \eta_0^2 \sum_{\abs{\lambda_i}\ge n^{\epsilon/2}\eta_0} \frac{1}{\lambda_i^2} \\
            &\lesssim \E\abs{\set{i\given\abs{\lambda_i}< n^{\epsilon/2}\eta_0}} \cdot \log n + \E\eta_0^2 \sum_{\abs{\lambda_i}\ge n^{\epsilon/2}\eta_0} \frac{1}{\lambda_i^2} \\
            &\lesssim (\log n) n^{-\epsilon/4} +\E \eta_0\sum_{\abs{\lambda_i}\ge n^{\epsilon/2}\eta_0} 
            \frac{n^\epsilon\eta_0}{\lambda_i^2+(n^\epsilon\eta_0)^2} \\
            &\lesssim  (\log n) n^{-\epsilon/4}+ n\eta_0 \braket{\Im G^z(\ii n^\epsilon\eta_0)} \le    n^{\xi-\epsilon/4},
        \end{split}
    \end{equation}
    where in the last inequality we used averaged local law in~\eqref{ave}. By combining~\eqref{eq:estsmeig1} and~\eqref{eq:usefsplit} we conclude the claimed bound on \(\abs{I_0^{\eta_0}}\) conditionally on the high-probability event \(\Sigma\). However, by a trivial cut-off argument due to~\eqref{eq:boundmom} we may assume that \(\chi\) is bounded by \(\abs{\chi}\le n\). Then the lhs.\ of~\eqref{eq:bigdiv}, and the integrals \(J_T\), \(I_{\eta_0}^{\eta_1}\), \(I_{\eta_1}^T\), \(I_T^\infty\) in~\eqref{eq:bigdiv} are bounded deterministically by, say, \(n^2\), hence so is \(I_0^{\eta_0}\), and we conclude the claimed bound on \(\abs{I_0^{\eta_0}}\) also unconditionally.

    We may also complete the proof without relying on~\cite{TaoVusmooth}.  Under the additonal regularlity assumption~\eqref{eq:reg}, due to~\cite[Proposition 5.7]{MR3770875} it follows that
    \begin{equation}\label{eq:b}
        \Prob\left( \lambda_1\le \frac{u}{n} \right)\lesssim u^\frac{2\alpha}{1+\alpha}n^{\beta+1},
    \end{equation}
    for any \(u>0\). Then~\eqref{eq:b} allows us to estimate the rhs.\ of~\eqref{eq:estsmeig1} by, say, \(n^{-10}\) by choosing \(l\) large enough.
    The proof of~\eqref{eq:estsmeig2} can even avoid the smoothness assumption~\eqref{eq:reg}, using~\cite[Eq. (4a)]{1908.01653} (see below~\eqref{eq:smass} for more details). 
\end{proof}

\begin{proof}[Proof of Proposition~\ref{prop I eta1 T}]
    In order to estimate
    \[\E\abs{I_{\eta_1}^T}^2 = n^2 \int_{\C^2}\diff^2 z\diff^2 z' \Delta f(z)\ov{\Delta f(z')} \int_{[\eta_1,T]^2}\diff \eta\diff\eta' \braket{\Im G^z(\ii\eta)-\Im m^z(\ii\eta)}\braket{\Im G^{z'}(\ii\eta')-\Im m^{z'}(\ii\eta')} \]
    we perform a first-order Taylor expansion of \(z'\) around \(z\),
    \begin{equation}\label{G - m Taylor}
        \begin{split}
            \braket{\Im G^{z'}(\ii\eta')-\Im m^{z'}(\ii\eta')}&=\braket{\Im G^z(\ii\eta')-\Im m^{z}(\ii\eta')}\\
            &\quad +\int_0^1\Im\Bigl[\partial_z\braket{G^z(\ii\eta')-m^z(\ii\eta')}\vert_{z=z(s)}\Bigr] (z'-z)\diff s\\
            &\quad +\int_0^1 \Im\Bigl[\partial_{\overline{z}}\braket{G^z(\ii\eta')-m^z(\ii\eta')}\vert_{z=z(s)}\Bigr] (\overline{z'-z}) \diff s,
        \end{split}
    \end{equation}
    where \(z(s)=sz'+(1-s)z\). For the derivatives we have the bounds (see~\cite{NonhermitianGGG})
    \begin{equation}\label{GFG local law}
        \abs{\partial_{\ov{z}}\braket{G^z -  m^z}} + \abs{\partial_{z}\braket{G^z -  m^z}} \lesssim  \frac{n^\xi}{n\eta^{3/2}}
    \end{equation}

    and from using~\eqref{G - m Taylor} and~\eqref{GFG local law} we thus conclude
    \[ \begin{split}
        \E\abs{I_{\eta_1}^T}^2 &= n^2 \int_{\C^2}\diff^2 z\diff^2 z' \Delta f(z)\ov{\Delta f(z')} \int_{[\eta_1,T]^2}\diff \eta\diff\eta' \landauO*{n^\xi\frac{1}{n\eta}\frac{\abs{z-z'}}{n(\eta')^{3/2}}} \\
        &\lesssim n^\xi n^2 \norm{\Delta f}_1^2 \frac{\log n}{n} \frac{1}{n^{3/2}\eta_1^{1/2}}  \lesssim n^{2\xi} \frac{\norm{\Delta f}_1^2}{\sqrt{n\eta_1}}=n^{2\xi-\epsilon/2}\norm{\Delta f}_1^2,
    \end{split} \]
    since the \(0\)-th term of~\eqref{G - m Taylor} does not contribute to the integral due to \(\int_\C \Delta f(z)\diff^2 z =0\). 
\end{proof}

\printbibliography

\end{document}

